\documentclass[11pt,letterpaper,reqno]{amsart}
\usepackage{tikz}
\usetikzlibrary{positioning, shapes.geometric, arrows.meta, calc}
\usepackage{amssymb}
\usepackage{amsmath}
\usepackage{amsthm}
\usepackage{amsfonts}
\usepackage{bbm}
\usepackage{enumitem} 
\usepackage{pgfplots}
\pgfplotsset{compat=1.18} 
\usepackage{booktabs}
\usepackage{graphicx}
\usepackage[T1]{fontenc}
\usepackage{doi}
\usepackage{float} 
\usepackage{comment} 
\usepackage{bookmark}
\usepackage{hyperref}
\hypersetup{pdfstartview={FitH}}

\DeclareMathOperator{\rank}{rank}

\newtheorem{theorem}{Theorem}[section]
\newtheorem{lemma}[theorem]{Lemma}
\newtheorem{proposition}[theorem]{Proposition}
\newtheorem{corollary}[theorem]{Corollary}

\newtheorem{problem}[theorem]{Problem}

\theoremstyle{definition}

\newtheorem{remark}[theorem]{Remark}

\let\originalleft\left
\let\originalright\right
\renewcommand{\left}{\mathopen{}\mathclose\bgroup\originalleft}
\renewcommand{\right}{\aftergroup\egroup\originalright}

\newcommand{\E}{\mathbb{E}}        
\newcommand{\Var}{\mathrm{Var}}    

\newcommand{\PP}{\mathbb{P}}     
\newcommand{\Q}{\mathbb{Q}}
\newcommand{\ind}{\mathbf{1}}      
\newcommand{\F}{\mathbb{F}}

\begin{document}

\title{An Erd\H{o}s problem on random subset sums in finite abelian groups}

\author[J.~Ma]{Jie Ma}
\author[Q.~Tang]{Quanyu Tang}

\address{School of Mathematical Sciences, University of Science and Technology of China, Hefei, Anhui 230026, and Yau Mathematical Sciences Center, Tsinghua University, Beijing 100084, China}
\email{jiema@ustc.edu.cn}
\address{School of Mathematics and Statistics, Xi'an Jiaotong University, Xi'an 710049, P. R. China}
\email{tang\_quanyu@163.com}

\subjclass[2020]{11B30, 60C05, 05D40}

\keywords{random subset sums, abelian groups, Erd\H{o}s problems.}

\begin{abstract}
Let $f(N)$ denote the least integer $k$ such that, if $G$ is an abelian group of order $N$ and $A \subseteq G$ is a uniformly random $k$-element subset, then with probability at least $\tfrac12$ the subset-sum set $\{ \sum_{x \in S} x : S \subseteq A \}$ equals $G$.
In 1965, Erd\H{o}s and R\'{e}nyi proved that for all $N$,
$$
f(N) \le \log_2 N + \left(\frac{1}{\log 2}+o(1)\right)\log\log N.
$$
Erd\H{o}s later conjectured that this bound cannot be improved to $f(N)\le \log_2 N+o(\log\log N)$.
In this paper we confirm this conjecture by showing that, for primes $p$,
$$
f(p)\ge \log_2 p+\left(\frac{1}{2\log 2}+o(1)\right)\log\log p.
$$
\end{abstract}

\maketitle

\section{Introduction}

The study of random subset sums in finite abelian groups dates back to the work of Erd\H{o}s and R\'{e}nyi~\cite{ErRe65} and of Erd\H{o}s and Hall~\cite{ErHa78b}. Let $f(N)$ denote the least integer $k$ such that the following holds: if $G$ is an abelian group of order $N$ and
$A\subseteq G$ is a uniformly random $k$-element subset, then with probability at least $1/2$ every element of $G$
can be written as a sum of distinct elements of $A$. Erd\H{o}s and R\'{e}nyi~\cite{ErRe65} proved the universal upper bound
\begin{equation}\label{eq:erre_upper_bound_v1}
f(N) \le \log_2 N + O(\log\log N),
\end{equation}
whereas Erd\H{o}s and Hall~\cite{ErHa78b} showed that this cannot, in general, be improved to
\[
f(N)\le \log_2 N + o(\log\log\log N).
\]
In 1973, Erd\H{o}s~\cite[p.~127]{Er73} conjectured that the $O(\log\log N)$ error term in~\eqref{eq:erre_upper_bound_v1}
cannot be sharpened to $o(\log\log N)$.
This conjecture is listed as Problem~\#543 on Bloom's Erd\H{o}s Problems website~\cite{EP543} in the following self-contained form.

\begin{problem}\label{prob:EP543}
Define $f(N)$ to be the minimal $k$ such that the following holds: if $G$ is an abelian group of size $N$ and
$A\subseteq G$ is a uniformly random $k$-element subset, then with probability at least $1/2$ every element of $G$
can be written in the form $\sum_{x\in S}x$ for some $S\subseteq A$. Is
\[
f(N) \le \log_2 N + o(\log\log N)\,?
\]
\end{problem}

In this paper we confirm Erd\H{o}s' conjecture by answering Problem~\ref{prob:EP543} in the negative.
We restrict attention to prime orders $N=p$ (so $G$ has to be isomorphic to the cyclic group $\F_p$ of order $p$). This suffices to rule out the proposed bound
$f(N)\le \log_2 N+o(\log\log N)$ for general $N$.
The notation $\F_p$ usually stands for the field of $p$ elements, and here we only view it as an additive group. 
For any subset $A\subseteq \F_p$ we write
\[
\Sigma(A):=\Bigl\{\sum_{x\in S}x:\ S\subseteq A\Bigr\},
\]
where the empty sum is set to be $0$.
In this setting, equivalently, $f(p)$ denotes the least integer $k$ such that a uniformly random $k$-element subset
$A\subseteq \F_p$ satisfies $\PP(\Sigma(A)=\F_p)\ge \tfrac12$.
Our main result is as follows.

\begin{theorem}\label{thm:counterexample}
Fix any constant $c$ with $0<c<\frac{1}{2\log 2}$. Let $p$ be prime and let $A\subseteq \mathbb{F}_p$ be a uniformly random $k$-element subset with $k=\lfloor \log_2 p + c\log\log p\rfloor$. Then
\[
\PP\bigl(\Sigma(A)=\mathbb{F}_p\bigr)\longrightarrow 0 \qquad (p\to\infty).
\]
\end{theorem}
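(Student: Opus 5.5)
The plan is a second-moment argument on the number of elements missed by $\Sigma(A)$. Two simplifications come first. We replace the uniform $k$-subset by $k$ independent uniform elements $a_1,\dots,a_k$ of $\F_p$; they are distinct with probability $1-O(k^2/p)$, and conditional on distinctness we recover the uniform $k$-subset. Next, for $x\in\F_p$ let
\[
N(x):=\#\Bigl\{S\subseteq[k]:\ \textstyle\sum_{i\in S}a_i=x\Bigr\},
\]
so that $\sum_x N(x)=2^k$. Each $x$ is then represented on average
\[
\lambda:=2^k/p\asymp(\log p)^{c\log 2}
\]
times. The hypothesis $c<\frac1{2\log2}$ says exactly that $\lambda^2=o(\log p)$, and in fact $\lambda^2\le(\log p)^{1-\delta}$ for some fixed $\delta>0$. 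Let
\[
X:=\#\{x\in\F_p:\ N(x)=0\}=p-|\Sigma(A)|.
\]
It suffices to show that $\E X\to\infty$ and $\Var X=o\bigl((\E X)^2\bigr)$, since Chebyshev then gives $\PP(X=0)\to0$.

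\emph{Step 1: a lower bound on the miss probability.} The aim is
\[
\PP\bigl(x\notin\Sigma(A)\bigr)\ \ge\ \exp\bigl(-C\lambda^2\bigr)\quad\text{for every fixed }x .
\]
Given this, $\E X\ge p\,e^{-C\lambda^2}=p^{1-o(1)}\to\infty$. A Poisson heuristic would suggest $e^{-\lambda}$, but the events $\{\sum_{S}a_i=x\}$ are strongly dependent, so we settle for the weaker bound. To prove it, split $A=B\cup C$, where $B$ is the first $m$ elements and $C$ the last $r=k-m$. Choose $m$ so that $2^m\approx p/(K\lambda)$ for a large constant $K$; then $r=O(\log\lambda)$. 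Since $\Sigma(A)=\bigcup_{T\subseteq C}\bigl(\Sigma(B)+\sum T\bigr)$, we have $x\notin\Sigma(A)$ iff the $2^r=O(\lambda)$ points $x-\sum T$, for $T\subseteq C$, all avoid $\Sigma(B)$. The set $\Sigma(B)$ has density at most $1/(K\lambda)$. Conditioning on $B$ and exposing the elements of $C$ one at a time, each new element must send $2^{j}$ shifted points into the complement of a set $D_j$ of density $O(2^{j}/(K\lambda))$, whose complement has density at least a constant. Multiplying these conditional probabilities should give a bound of the form $\exp(-O(\lambda)\cdot\log\lambda)$, or in the worst case $\exp(-O(\lambda^2))$; either is enough, and the room between $\lambda\log\lambda$ and $\lambda^2$ is what we spend to keep the argument crude. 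If this route gets messy, a fallback is to use the symmetry $x\in\Sigma(A)\iff \sum_i a_i-x\in\Sigma(A)$ together with a Harris-type (FKG) positive correlation for the decreasing events $\{a_i\notin x-\Sigma(A\setminus\{a_i\})\}$.

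\emph{Step 2: decorrelation, and the expected main obstacle.} We need
\[
\PP(x,y\notin\Sigma(A))\le(1+o(1))\,\PP(x\notin\Sigma(A))\,\PP(y\notin\Sigma(A))
\]
for all but $o(p^2)$ pairs $(x,y)$; the remaining pairs contribute at most $o(p^2)\cdot 1$, which is negligible because $(\E X)^2\ge p^{2-o(1)}$. The natural approach conditions on $B$ as in Step 1. Then the events for $x$ and for $y$ depend on the fresh elements of $C$ only through the translates $x-\sum T$ and $y-\sum T$. When $y-x$ avoids the small set $\Sigma(C)-\Sigma(C)$, of size $O(\lambda^2)=p^{o(1)}$, these translates are disjoint. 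A Fourier computation on the indicator of $\Sigma(B)$ should then show that the two avoidance probabilities asymptotically factor, provided the nontrivial Fourier coefficients of $1_{\Sigma(B)}$ are small. For a random $B$ this holds with high probability by a second-moment bound on $\sum_{\xi\ne0}|\widehat{1_{\Sigma(B)}}(\xi)|^2$, which counts additive coincidences among subset sums. We must also ensure that conditioning on $B$ does not destroy the lower bound from Step 1, that is, that $\Sigma(B)$ is not exceptionally large or structured. I expect this uniformity in $B$, together with the unavoidable loss from $\lambda$ to $\lambda^2$ in the exponent, to be the main difficulty. That loss is precisely why the argument reaches only $c<\frac1{2\log2}$ rather than the Erd\H{o}s--R\'enyi threshold $\frac1{\log 2}$.
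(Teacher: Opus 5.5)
There is a genuine gap. Your outer frame matches the paper's: pass to i.i.d. elements, count missed elements, apply Chebyshev. But Step 2, which is where all the difficulty lies, is only a hope, and the mechanism you propose for it would not work as stated.

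\textbf{Step 2 (the pair correlation).} Conditional on $B$, the events $x\notin\Sigma(A)$ and $y\notin\Sigma(A)$ are functions of the \emph{same} $c_1,\dots,c_r$. So disjointness of the two translate families gives no independence. Each event says that the $r$-dimensional parallelepiped $\{x-\sum_{i\in T}c_i\}_{T\subseteq[r]}$ avoids $\Sigma(B)$. Here $2^r=2^k/2^m\approx K\lambda^2$, not $O(\lambda)$, so $r\to\infty$.

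You need these avoidance probabilities, which are of order $e^{-\lambda}$ and $e^{-2\lambda}$, to relative accuracy $1+o(1)$. Small nontrivial Fourier coefficients of $1_{\Sigma(B)}$ cannot deliver this, for three reasons:
\begin{enumerate}
\item They give only additive errors.
\item They do not control such configurations at all. The parallelepiped contains $3$-dimensional sub-cubes with a uniformly random base point, whose counts are governed by Gowers-type norms, not by $\max_{\xi\neq 0}|\widehat{1_{\Sigma(B)}}(\xi)|$.
\item By Parseval, $\sum_{\xi\ne0}|\widehat{1_{\Sigma(B)}}(\xi)|^2$ is determined by $|\Sigma(B)|$ alone and is not small.
\end{enumerate}

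Even granting conditional factorization, you would still need $\E_B[P_x(B)P_y(B)]\approx \E_B[P_x(B)]\,\E_B[P_y(B)]$ for $P_x(B):=\PP(x\notin\Sigma(A)\mid B)$. This quantity depends exponentially on $2^r|\Sigma(B)|/p$, and you do not address it.

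The bookkeeping of exceptional pairs is also wrong. $(\E X)^2$ is of order $p^2e^{-2\lambda}$, which is $o(p^2/\log p)$. So discarding $o(p^2)$ pairs at cost $1$ each is not negligible, and ``$(\E X)^2\ge p^{2-o(1)}$'' does not justify it. The exceptional contribution must be $o(p^2e^{-2\lambda})$.

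\textbf{Step 1 (the first moment).} As written, Step 1 also breaks down. The density bound $2^j/(K\lambda)$ for $\Sigma(B\cup\{c_1,\dots,c_j\})$ exceeds $1$ once $2^j>K\lambda$, so the complements need not have constant density. Your FKG fallback has no monotone structure on $\F_p$ to work with. The step is, however, easily repaired, and it loses nothing. Let $u_j$ be the density of $\F_p\setminus\Sigma(a_1,\dots,a_j)$. Then $\E[u_{j+1}\mid a_1,\dots,a_j]=u_j^2$, so Jensen gives $\E u_k\ge (1-1/p)^{2^k}=(1+o(1))e^{-\lambda}$. Invariance under dilations $a_i\mapsto ta_i$ makes $\PP(x\notin\Sigma(A))$ the same for every $x\ne 0$.

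So the first moment is easy for every $c$, and there is no $\lambda$-to-$\lambda^2$ loss. Your explanation of the threshold $\frac{1}{2\log 2}$ is therefore not the right one. Everything reduces to proving $\PP(x,y\notin\Sigma(A))\le(1+o(1))e^{-2\lambda}$.

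\textbf{How the paper does it.} The paper proves this uniformly over all pairs, with no exceptions, via Proposition~\ref{prop:poisson-miss}:
\begin{enumerate}
\item Let $X_B$ count the nonempty $S\subseteq[k]$ with $\sigma(S)\in B$, for $|B|\in\{1,2\}$. The paper computes the factorial moments $\E[(X_B)_r]$ for $r\le R=\lfloor(\log p)^\beta\rfloor$ with $\alpha<\beta<\frac12$.
\item Each moment is a sum of $p^{-\rank(V)}$ over $r$-tuples of distinct index sets.
\item Full-rank tuples contribute essentially $(|B|\lambda)^r$.
\item Tuples of rank $d<r$ number at most $2^{r^2}\bigl(\tfrac34\,2^d\bigr)^k$, by the Boolean-cube bound $|W\cap\{0,1\}^r|\le\frac34\,2^d$.
\item Bonferroni inequalities then give $\PP(X_B=0)=(1+o(1))e^{-|B|\lambda}$.
\end{enumerate}
The restriction $c<\frac{1}{2\log 2}$ comes from needing $R\gg\lambda$ while keeping $2^{R^2}=p^{o(1)}$.
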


This provides a quantitative bound for primes, which answers Problem~\ref{prob:EP543} in the negative.

\begin{corollary}\label{cor:fp-lower}
For all sufficiently large primes $p$, it holds that
\[
f(p)\ge \log_2 p + \left(\frac{1}{2\log 2}+o(1)\right)\log\log p.
\]
\end{corollary}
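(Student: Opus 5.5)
The corollary follows from Theorem~\ref{thm:counterexample} by a monotonicity argument and a diagonal choice of the constant $c$.

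\textbf{Step 1: monotonicity in $k$.} For $k'\le k$, couple the uniform random $k'$-subset $A'$ with the uniform random $k$-subset $A$ so that $A'\subseteq A$. To do this, take $A$ uniform and let $A'$ be a uniform $k'$-subset of $A$; then $A'$ is marginally uniform among $k'$-subsets of $\F_p$. Since $S\subseteq A'$ implies $S\subseteq A$, we have $\Sigma(A')\subseteq\Sigma(A)$. Hence $\{\Sigma(A')=\F_p\}\subseteq\{\Sigma(A)=\F_p\}$, and therefore
\[
\PP\bigl(\Sigma(A_{k'})=\F_p\bigr)\le \PP\bigl(\Sigma(A_k)=\F_p\bigr).
\]
It follows that $f(p)>k$ whenever $\PP(\Sigma(A_k)=\F_p)<\tfrac12$.

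\textbf{Step 2: a fixed constant.} Fix $0<c<\frac{1}{2\log 2}$. By Theorem~\ref{thm:counterexample}, there is $p_0(c)$ such that for all primes $p\ge p_0(c)$ and $k=\lfloor \log_2 p+c\log\log p\rfloor$, the probability is below $\tfrac12$. By Step~1, $f(p)>k$, so
\[
f(p)\ge \log_2 p + c\log\log p \qquad (p\ge p_0(c)).
\]

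\textbf{Step 3: upgrading to $o(1)$.} Take the sequence $c_j=\frac{1}{2\log 2}-\frac1j$. We may choose thresholds $P_1<P_2<\cdots$ with $P_j\ge p_0(c_j)$. For $P_j\le p<P_{j+1}$, define $\varepsilon(p)=1/j$. Then $\varepsilon(p)\to0$, and Step~2 gives
\[
f(p)\ge \log_2 p+\Bigl(\frac{1}{2\log 2}-\varepsilon(p)\Bigr)\log\log p
\]
for all sufficiently large primes $p$. This is exactly the claimed bound with the $o(1)$ term.

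There is no real obstacle here. The only point needing care is Step~1: the definition of $f(p)$ as the \emph{least} $k$ with probability at least $\tfrac12$ requires the probability to fail for every $k'\le k$, not just at the single value $k$ treated in Theorem~\ref{thm:counterexample}. The coupling in Step~1 supplies this.
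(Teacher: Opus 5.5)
Your proof is correct and follows the paper's route: fix $c=\frac{1}{2\log 2}-\varepsilon$, use Theorem~\ref{thm:counterexample} to get probability below $\tfrac12$ at $k=\lfloor \log_2 p+c\log\log p\rfloor$, conclude $f(p)\ge k+1$, and let $\varepsilon\to 0$. Your diagonal sequence only makes this last step explicit (start it at $j\ge 2$ so that $c_j>0$). Your coupling in Step~1 is a worthwhile addition, because the paper passes from ``the probability is below $\tfrac12$ at this $k$'' to $f(p)>k$ simply ``by the definition of $f(p)$,'' which tacitly uses exactly the monotonicity in $k$ that you prove.
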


\subsection{Proof overview and comparison with earlier work}

The Erd\H{o}s--R\'enyi upper bound~\cite{ErRe65} sets the benchmark for the problem. We now discuss the lower-bound side and the main ideas of the present proof.

A closely related earlier work is that of Erd\H{o}s and Hall~\cite{ErHa78b}, who worked in an i.i.d.\ model on a finite abelian group \(G\) of order \(n\). For every \(r\geq 0\), they defined \(d(r)\) to be the number of group elements having exactly \(r\) representations as subset sums of \(k\) independent random elements. In particular, \(d(0)\) denotes the number of \emph{missed} elements (i.e., elements not representable as subset sums of these random elements). Write \(\mu=2^k/n\). They proved a general theorem showing that, under the assumption that the number of elements of each fixed order in \(G\) is \(o(n)\), if \(k=\log_2 n+O(1)\), then  for every fixed integer \(r\ge 0\),
\[
d(r)=\bigl(1+o(1)\bigr)\,n e^{-\mu}\frac{\mu^r}{r!}
\]
with probability tending to \(1\) as \(n\to\infty\). Their proof is moment-theoretic, based on the analysis of fixed-order moments and variances of the representation counts, combined with character-sum estimates.

For the present problem, however, the decisive quantity is the single statistic \(d(0)\). Erd\H{o}s and Hall~\cite{ErHa78b} also proved an additional result for cyclic groups: there exists an absolute constant \(b>0\) such that \(d(0)>0\) with probability tending to \(1\) whenever \(\mu<b\log\log n\). Since \(\mu=2^k/n\), this is equivalent to
\[
k<\log_2 n+\log_2(b\log\log n)
=\log_2 n+O(\log\log\log n).
\]
Their proof again relies on the moment method, but uses the additional structure of cyclic groups to extend the fixed-moment analysis to a growing range of moment orders. For broader background on sets of multiples and probabilistic group theory, see Hall~\cite[Chapter~4]{Hall96}.

The present paper is close in spirit to~\cite{ErHa78b}, but technically quite different. We work only in the prime cyclic case \(G=\F_p\), and we do not seek asymptotics for all fixed multiplicities \(d(r)\). Instead, after reducing the random \(k\)-subset model to an i.i.d.\ model, we focus on the number \(d(0)\) of missed elements and reduce the problem to estimating the probability $\PP_B$ that all elements of a prescribed subset \(B\subseteq \F_p\setminus\{0\}\) with \(|B|=1\) or \(2\) are missed. 
More precisely, if \(X_B\) denotes the number of indexed subset sums landing in
\(B\), then we require a Poisson-type estimate for \(X_B\) with mean $|B|\lambda$, where
\[
\lambda=\frac{2^k-1}{p}\to\infty.
\]
In our application we take \(k=\lfloor \log_2 p+c\log\log p\rfloor\), which implies
\[
\lambda\asymp (\log p)^\alpha
\qquad \alpha=c\log 2\in(0,1/2).
\]
We then estimate the desired probability \(\PP_B:=\PP(X_B=0)\) using Bonferroni-type
inequalities, which require uniform control of the factorial moments of
\(X_B\) up to a truncated level \(R\asymp (\log p)^{O(1)}\to\infty\); see the proof of Proposition~\ref{prop:poisson-miss}.
This approach is technically different from that of
Erd\H{o}s--Hall~\cite{ErHa78b}.

Another new technical ingredient is a quantitative bound for low-rank
incidence matrices (see Lemma~\ref{lem:fullrank-dominates}), obtained via a Boolean-cube intersection bound.
Together, these ingredients allow us to raise the earlier cyclic obstruction
from the \(\log\log\log p\)-scale to the \(\log\log p\)-scale.

\subsection{Paper organization}
The rest of the paper is organized as follows. Section~\ref{sec:preliminaries} collects notation and several auxiliary lemmas. In Section~\ref{sec:proof} we prove Theorem~\ref{thm:counterexample} and Corollary~\ref{cor:fp-lower} by first establishing the key Poisson-like estimate, Proposition~\ref{prop:poisson-miss}, and then combining it with a second-moment argument. Section~\ref{sec:concluding} contains several concluding remarks.

\section{Preliminaries}\label{sec:preliminaries}

\subsection{Notation}
Throughout, $\log$ denotes the natural logarithm and $\log_2 x:=\log x/\log 2$. For a positive integer $k$, we define $[k] := \{1, \dots, k\}$. 
For $Y\in\mathbb{R}$ and $r\in\mathbb{N}$, let $(Y)_r:=Y(Y-1)\cdots(Y-r+1)$ denote the falling factorial, with the convention $(Y)_0:=1$. 

We use Vinogradov's asymptotic notation. For functions \(f=f(n)\) and \(g=g(n)\), we write
\(f=O(g)\), \(g=\Omega(f)\), \(f\ll g\), or \(g\gg f\) to mean that there exists a constant \(C>0\) such that
\(|f(n)|\le C g(n)\) for all sufficiently large \(n\).
We write \(f\asymp g\) or \(f=\Theta(g)\) to mean that \(f\ll g\) and \(g\ll f\), and we write
\(f=o(g)\) to mean that \(f(n)/g(n)\to 0\) as \(n\to\infty\).

\subsection{Auxiliary lemmas}
We now present several auxiliary lemmas that will be used repeatedly in the proof of the main theorem. We begin with two linear-algebraic observations for $\{0,1\}$-matrices.

Let \(\mathbb Q\) denote the field of rational numbers. We equip \(\mathbb Q^r\) with the standard inner product, and write \(W^\perp\) for the orthogonal complement of a subspace \(W\subseteq \mathbb Q^r\).

\begin{lemma}\label{lem:rank-stability}
Fix a constant $\beta$ with $0<\beta<\tfrac12$. Let \(r,k\ge 1\), and let \(V\in\{0,1\}^{r\times k}\) with $r\le (\log p)^{\beta}$. Then for all sufficiently large primes $p$,
\[
\rank_{\Q}(V)=\rank_{\F_p}(V).
\]
\end{lemma}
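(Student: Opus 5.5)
The plan is to compare the two ranks through minors. Over any field, the rank of $V$ is the largest size of a square submatrix with nonzero determinant. Every minor of $V$ is an integer, and its image in $\F_p$ is its reduction mod $p$. A minor that vanishes over $\Q$ therefore also vanishes over $\F_p$, which gives $\rank_{\F_p}(V)\le\rank_{\Q}(V)$ at once. For the reverse inequality, I will show that every nonzero integer minor of $V$ is strictly smaller than $p$ in absolute value. A nonzero minor cannot then be divisible by $p$, so a nonsingular $s\times s$ submatrix over $\Q$ stays nonsingular over $\F_p$.

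To bound the minors, let $M$ be any $s\times s$ submatrix of $V$, where $s\le \min(r,k)\le r$. Every entry of $M$ lies in $\{0,1\}$, so each column has Euclidean norm at most $\sqrt{s}$. Hadamard's inequality then gives
\[
|\det M|\le s^{s/2}\le r^{r/2}.
\]
With $r\le(\log p)^{\beta}$, this yields
\[
\log|\det M|\le \tfrac12 r\log r\le \tfrac{\beta}{2}(\log p)^{\beta}\log\log p.
\]
Since $\beta<1$ (and in particular $\beta<\tfrac12$), the right-hand side is $o(\log p)$. For all sufficiently large $p$ it is therefore less than $\log p$, uniformly over all admissible $r$, $k$ and $V$. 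So $0<|\det M|<p$ whenever $\det M\neq 0$.

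The number of columns $k$ plays no role, because only $s\times s$ minors with $s\le r$ are involved. I expect no real obstacle here. The only point needing care is that "sufficiently large $p$" depends only on $\beta$ and not on $r$, $k$ or $V$. This holds because the bound $r^{r/2}$ is increasing in $r$ and we are evaluating it at the worst case $r=(\log p)^{\beta}$. Even the crude bound $s!$ on the determinant would suffice, since $\log(r!)\le r\log r=o(\log p)$.
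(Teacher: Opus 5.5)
Your proposal is correct and follows essentially the same route as the paper: you get $\rank_{\F_p}(V)\le\rank_{\Q}(V)$ from reducing minors mod $p$, and the reverse inequality from Hadamard's bound $s^{s/2}\le r^{r/2}=p^{o(1)}<p$, so that a nonzero integer minor cannot vanish mod $p$. Your extra remarks — uniformity in $r$, $k$, $V$, and that the cruder bound $s!$ would also suffice — are accurate but not needed beyond what the paper does.
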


\begin{proof}
Let $s=\rank_{\Q}(V)$. 
Then there exists some $s\times s$ minor of $V$ with nonzero integer determinant $\Delta$.
By Hadamard's inequality,
\[
|\Delta|\le s^{s/2}\le r^{r/2}.
\]
Since $r\le (\log p)^{\beta}$, we have
\[
\log\bigl(r^{r/2}\bigr)=\frac r2\log r
\le \frac12(\log p)^{\beta}\log\log p
=o(\log p),
\]
because $\beta<1$. Hence $r^{r/2}<p$ for all sufficiently large $p$, so $\Delta\not\equiv 0\pmod p$ and therefore
$\rank_{\F_p}(V)\ge s$.
The reverse inequality $\rank_{\F_p}(V)\le \rank_{\Q}(V)$ holds for any integer matrix, since reducing modulo $p$
cannot increase rank. Thus $\rank_{\F_p}(V)=\rank_{\Q}(V)$.
\end{proof}

\begin{lemma}\label{lem:no-sparse}
Let \(r,k\ge 1\), let \(V\in\{0,1\}^{r\times k}\) have distinct and nonzero rows, and let \(W\) denote the column space of \(V\) over \(\mathbb{Q}\). Then \(W^\perp\) contains no nonzero vector supported on at most two coordinates.
\end{lemma}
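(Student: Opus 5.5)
Since $W$ is the column space of $V$, we have $W^\perp=\ker(V^{T})=\{y\in\Q^r: y^{T}V=0\}$. So the claim is that no nonzero $y\in\Q^r$ with at most two nonzero entries satisfies $\sum_i y_i v_i=0$, where $v_1,\dots,v_r\in\{0,1\}^k$ are the rows of $V$. We treat the two possible support sizes separately.

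\emph{Support size one.} Suppose $y=y_i e_i$ with $y_i\neq 0$. Then $y^{T}V=y_i v_i=0$ forces $v_i=0$. This contradicts the assumption that every row of $V$ is nonzero.

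\emph{Support size two.} Suppose $y=y_ie_i+y_je_j$ with $i\neq j$ and $y_i,y_j\neq 0$. Then $y_iv_i+y_jv_j=0$. Since $v_i\neq 0$, pick a coordinate $t$ with $(v_i)_t=1$. Reading off coordinate $t$ gives
\[
y_i+y_j(v_j)_t=0,
\]
so $(v_j)_t\neq 0$, hence $(v_j)_t=1$ and $y_j=-y_i$. Then $y_i(v_i-v_j)=0$, so $v_i=v_j$, contradicting the assumption that the rows are distinct.

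The argument is entirely elementary. The only point needing care is the identification $W^\perp=\ker V^{T}$ together with the use of the $0/1$ entries in the two-coordinate case: a single coordinate where $v_i$ is nonzero forces $y_j=-y_i$, and then distinctness of the rows finishes the proof. No step is a genuine obstacle.
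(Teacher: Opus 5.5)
Your proof is correct and follows essentially the same route as the paper: both rewrite $y\in W^\perp$ as a linear dependence $\sum_i y_i v_i=0$ among the rows in the support of $y$, then rule out support size one because rows are nonzero and support size two because rows are distinct. The only difference is that you check explicitly, via a coordinate $t$ with $(v_i)_t=1$, that two proportional nonzero $0/1$ vectors must coincide; the paper asserts this step without detail.
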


\begin{proof}
Let \(0\ne y\in W^\perp\), and write \(I=\operatorname{supp}(y)\subseteq [r]\).
Then for every \(x\in W\),
\[
\sum_{i\in I} y_i x_i = 0.
\]
In particular, applying this to each column \(v^{(t)}\in W\) of \(V\), we get
\[
\sum_{i\in I} y_i V_{i,t}=0
\qquad (1\le t\le k).
\]
Hence \(\sum_{i\in I} y_i\,R_i=0\), where \(R_i\in\mathbb Q^k\) denotes the \(i\)th row of \(V\). Thus the rows of
\(V\) indexed by \(I\) are linearly dependent over \(\mathbb Q\).

If \(|I|=1\), this says that one row of \(V\) is zero, contradicting the
hypothesis. If \(|I|=2\), then two rows of \(V\) are proportional. Since these
rows are nonzero vectors in \(\{0,1\}^k\), proportionality forces them to be
equal, again contradicting the hypothesis that the rows are distinct.
\end{proof}

Next, we provide a simple bound on the size of $W\cap\{0,1\}^r$
for proper subspaces $W\subseteq\Q^r$ under a mild non-degeneracy condition (i.e., the conclusion of Lemma~\ref{lem:no-sparse}).

\begin{lemma}\label{lem:34}
Let $W\subseteq\mathbb Q^r$ be a $d$-dimensional subspace with $d<r$. If $W^\perp$ contains no nonzero vector supported
on at most two coordinates, then
\[
|W\cap\{0,1\}^r|\le \frac34\,2^d.
\]
\end{lemma}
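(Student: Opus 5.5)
Our plan is to use induction on the codimension $r-d$, or more directly a projection argument. Since $W$ has dimension $d$, we can choose $d$ coordinates $J\subseteq[r]$ such that the projection $\pi_J:W\to\mathbb Q^J$ is an isomorphism. Here $J$ is an information set: the rows of a basis matrix restricted to $J$ are independent. Every vector of $W$ is then determined by its restriction to $J$. Hence $|W\cap\{0,1\}^r|\le 2^d$ trivially, because each $0/1$ vector in $W$ is determined by its $J$-pattern in $\{0,1\}^J$. To gain the factor $3/4$, we must show that at least a quarter of the patterns in $\{0,1\}^J$ do not extend to a $0/1$ vector of $W$.

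Fix a coordinate $i\notin J$, which exists because $d<r$. On $W$, the coordinate $x_i$ is a linear form in the coordinates $(x_j)_{j\in J}$, say $x_i=\sum_{j\in J}a_jx_j$. Equivalently, the vector $e_i-\sum_j a_je_j$ lies in $W^\perp$. This vector is supported on $\{i\}\cup\operatorname{supp}(a)$. By hypothesis its support has size at least $3$, so $a$ has at least two nonzero coefficients, say $a_{j_1},a_{j_2}\neq 0$.

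The key step is the following claim. Let $a$ have at least two nonzero entries, and let $x$ be uniform on $\{0,1\}^J$. Then $\PP\bigl(\sum_j a_jx_j\in\{0,1\}\bigr)\le 3/4$. To prove it, condition on all coordinates except $x_{j_1},x_{j_2}$, so that the form becomes $c+a_{j_1}x_{j_1}+a_{j_2}x_{j_2}$. This takes the four values $c,\ c+a_{j_1},\ c+a_{j_2},\ c+a_{j_1}+a_{j_2}$ with probability $1/4$ each. We must show that not all four lie in $\{0,1\}$. If they did, then $a_{j_1},a_{j_2}\in\{\pm1\}$, since each is a nonzero difference of two elements of $\{0,1\}$. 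Then $c+a_{j_1}$ and $c+a_{j_2}$ must both differ from $c$ and stay in $\{0,1\}$, which forces $a_{j_1}=a_{j_2}=\pm1$. But then $c+a_{j_1}+a_{j_2}=c\pm2\notin\{0,1\}$, a contradiction. Thus at most three of the four values are admissible, giving the bound $3/4$ conditionally and hence unconditionally.

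Combining these steps, the number of $0/1$ vectors in $W$ is at most the number of patterns $x\in\{0,1\}^J$ whose forced $i$-th coordinate lies in $\{0,1\}$. This number is at most $\tfrac34 2^d$, which proves the lemma. The main care point is the existence of an information set $J$ and the expression of $x_i$ in terms of $x_J$, which is standard linear algebra (row reduction of a basis matrix). The case analysis in the claim is the only genuinely new input, and it is elementary.
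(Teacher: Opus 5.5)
Your proof is correct and follows essentially the same route as the paper: choose an information set $J$, take a coordinate $i\notin J$ whose linear form must have at least two nonzero coefficients by the $W^\perp$ hypothesis, and condition on all other coordinates of a uniform pattern in $\{0,1\}^J$. Your direct check that the four equally likely outcomes cannot all land in $\{0,1\}$ is a slightly cleaner substitute for the paper's split into the case of four distinct values and the degenerate case $a=\pm b$.
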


\begin{proof}
Choose a coordinate set $J\subseteq[r]$ with $|J|=d$ such that the projection $\pi_J:W\to\mathbb Q^J$ is an isomorphism.
Thus each $x\in W$ is uniquely determined by $u:=x|_J\in\mathbb Q^d$, and for each $i\notin J$ there exists a homogeneous linear form
$L_i:\mathbb Q^d\to\mathbb Q$ with $x_i=L_i(u)$. For $j\in J$, also set $L_j(u)=u_j$. Hence
\[
|W\cap\{0,1\}^r|
=
\left|
\left\{
u\in\{0,1\}^d:\ L_i(u)\in\{0,1\}\ \forall i\in [r]\setminus J
\right\}
\right|.
\]

The assumption on $W^\perp$ implies that the forms $L_1,\dots,L_r$ are pairwise linearly independent. Indeed, if
$L_i\equiv 0$, then $x_i=0$ for all $x\in W$, so $e_i\in W^\perp$, where $e_i$ denotes the $i$th standard basis vector of $\mathbb Q^r$; and if $L_i=cL_j$ for some distinct $i,j\in [r]$
and some $c\in\mathbb Q^\times$, then $x_i-cx_j=0$ for all $x\in W$, so $e_i-ce_j\in W^\perp$. In particular, for every
$i\in [r]\setminus J$, the form $L_i$ has at least two nonzero coefficients.

Fix any $i_0\in [r]\setminus J$ and write $L_{i_0}(u)=\sum_{\ell=1}^d \alpha_\ell u_\ell$ with at least two $\alpha_\ell\ne0$.
Let $Z$ be uniform on $\{0,1\}^d$. Conditioning on all coordinates except two indices $s\ne t$ with $\alpha_s,\alpha_t\ne0$,
the random variable $L_{i_0}(Z)$ becomes $C+aX+bY$ where $X,Y$ are independent $\mathrm{Bernoulli}(1/2)$ and $a,b\ne0$.
Thus $C+aX+bY$ takes four equiprobable values; in the degenerate case $a=\pm b$ it takes three values with masses
$1/4,1/2,1/4$. In either case,
\[
\mathbb P\bigl(L_{i_0}(Z)\in\{0,1\}\bigr)\le \frac34.
\]
Therefore at most a $3/4$ fraction of $u\in\{0,1\}^d$ satisfy $L_{i_0}(u)\in\{0,1\}$, and imposing further constraints
can only reduce the count. Hence $|W\cap\{0,1\}^r|\le \frac34\,2^d$.
\end{proof}

For the remainder of the paper, let $p$ be a sufficiently large prime and set
\begin{equation}\label{eq:def-k-M-lambda}
k:=\Bigl\lfloor \log_2 p + c\log\log p\Bigr\rfloor,\qquad
M:=2^k-1,\qquad \mbox{ and } \qquad
\lambda:=\frac{M}{p},
\end{equation}
where $c\in \big(0,\frac{1}{2\log 2}\big)$ is arbitrary fixed.
We will use the following estimate repeatedly.

\begin{lemma}\label{lem:lambda}
Let $\alpha:=c\log 2\in(0,1/2)$. 
Then $\lambda=\Theta((\log p)^{\alpha})$. In particular,
\[
\lambda=o(\log p)\qquad\text{and}\qquad e^{\lambda}=p^{o(1)}.
\]
\end{lemma}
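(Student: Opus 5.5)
The plan is to unwind the definitions in \eqref{eq:def-k-M-lambda} and bound the floor function from both sides. Write $x:=\log_2 p + c\log\log p$, so that $k=\lfloor x\rfloor$ and hence $x-1<k\le x$. Exponentiating base $2$ gives
\[
\tfrac12\, 2^{x} < 2^k \le 2^{x}, \qquad 2^{x}=p\cdot 2^{c\log\log p}=p\,(\log p)^{c\log 2}=p(\log p)^{\alpha},
\]
where we used $2^{c\log\log p}=e^{(c\log 2)\log\log p}=(\log p)^{\alpha}$. Since $M=2^k-1$ and $\lambda=M/p$, this yields
\[
\tfrac12(\log p)^{\alpha}-\tfrac1p < \lambda \le (\log p)^{\alpha}.
\]
For $p$ large, the term $1/p$ is negligible against $(\log p)^{\alpha}\to\infty$, because $\alpha>0$. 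This gives $\tfrac14(\log p)^{\alpha}\le\lambda\le(\log p)^{\alpha}$, that is, $\lambda=\Theta((\log p)^{\alpha})$.

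For the two consequences, we only need the upper bound $\lambda\le(\log p)^{\alpha}$ together with $\alpha<1/2<1$. First, $\lambda/\log p\le(\log p)^{\alpha-1}\to0$, so $\lambda=o(\log p)$. Second, $e^{\lambda}=p^{\lambda/\log p}$, and we have just shown that the exponent $\lambda/\log p$ tends to $0$; hence $e^{\lambda}=p^{o(1)}$.

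There is no real obstacle here. The only points needing care are the loss of a factor of at most $2$ from the floor, and the $-1$ in $M$, which is absorbed since $(\log p)^{\alpha}\to\infty$. The assumption $c>0$ (so $\alpha>0$) is used only to absorb the $-1/p$ term in the lower bound. The assumption $c<\frac{1}{2\log 2}$ is far more than what these claims need, since $\alpha<1$ already suffices; the stronger restriction $\alpha<1/2$ is presumably reserved for later applications such as Lemma~\ref{lem:rank-stability}.
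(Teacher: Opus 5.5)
Your proof is correct and follows the paper's argument: sandwich $2^k$ between $\tfrac12 p(\log p)^{\alpha}$ and $p(\log p)^{\alpha}$, note that the $-1$ in $M$ is negligible, and use $\alpha<1$ to obtain $\lambda=o(\log p)$ and hence $e^{\lambda}=p^{o(1)}$. You simply make the floor and the $-1$ explicit where the paper writes $\asymp$ and $\sim$. One small aside in your closing remark is inaccurate, though it does not affect the proof: $1/p=o((\log p)^{\alpha})$ for every fixed real $\alpha$, so absorbing the $-1/p$ term does not actually require $c>0$.
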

\begin{proof}
By \eqref{eq:def-k-M-lambda}, we have
\[
2^k \asymp p(\log p)^{c\log 2}=p(\log p)^\alpha.
\]
Since \(M=2^k-1\sim 2^k\), we get \(\lambda=M/p=\Theta((\log p)^\alpha)\).
As \(\alpha<1\), this gives \(\lambda=o(\log p)\), and hence \(e^\lambda=p^{o(1)}\).
\end{proof}

We will also need the following crude but convenient estimate for falling factorials.

\begin{lemma}\label{lem:factorial-mr}
Fix a constant $\beta$ with $0<\beta<\tfrac12$. Assume $m\in\{1,2\}$ and $r\le (\log p)^{\beta}$. Then
\[
\frac{(mM)_r}{p^r}=m^r\,p^{o(1)}.
\]
\end{lemma}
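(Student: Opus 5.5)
We need $(mM)_r/p^r = m^r p^{o(1)}$, where $(mM)_r=\prod_{j=0}^{r-1}(mM-j)$. The plan is to sandwich the falling factorial between $(mM-r)^r$ and $(mM)^r$, and then use Lemma~\ref{lem:lambda} to control the powers of $\lambda=M/p$ that appear.

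\emph{Upper bound.} Since every factor satisfies $mM-j\le mM$,
\[
(mM)_r\le (mM)^r=m^r p^r\lambda^r .
\]
Hence $(mM)_r/p^r\le m^r\lambda^r$. By Lemma~\ref{lem:lambda}, $\lambda=\Theta((\log p)^\alpha)$, so $\log\lambda=O(\log\log p)$. Together with $r\le(\log p)^\beta$ this gives
\[
r\log\lambda\le (\log p)^\beta\,O(\log\log p)=o(\log p),
\]
because $\beta<1$. Therefore $\lambda^r=p^{o(1)}$.

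\emph{Lower bound.} Each factor satisfies $mM-j\ge mM-r$, and $r\le(\log p)^\beta=o(M)$ because $M\asymp p(\log p)^\alpha$. So $mM-j\ge mM(1-r/(mM))$. This yields
\[
\frac{(mM)_r}{p^r}\ge m^r\lambda^r\Bigl(1-\frac{r}{mM}\Bigr)^r\ge m^r\lambda^r\Bigl(1-\frac{r^2}{mM}\Bigr).
\]
The last step uses Bernoulli's inequality. The final bracket tends to $1$, since $r^2\le(\log p)^{2\beta}=o(M)$. Since also $\lambda\ge1$ for large $p$, we have $\lambda^r\ge1=p^{o(1)}$. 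Combining this with the upper bound gives $(mM)_r/p^r=m^r\lambda^r(1+o(1))=m^r p^{o(1)}$.

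There is no real obstacle here. The only point needing care is that the $p^{o(1)}$ must be uniform over all $r\le(\log p)^\beta$. The estimate $r\log\lambda=o(\log p)$ above is uniform in $r$, so this holds.
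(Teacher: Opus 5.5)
Your proof is correct and takes essentially the paper's route. The paper writes $(mM)_r=(mM)^r\prod_{j<r}\bigl(1-\tfrac{j}{mM}\bigr)=(mM)^r\exp\bigl(O(r^2/M)\bigr)=(mM)^r(1+o(1))$ and then uses $r\log\lambda=o(\log p)$ to conclude $\lambda^r=p^{o(1)}$. Your upper bound $(mM)_r\le (mM)^r$ together with the Bernoulli lower bound is just an explicit two-sided version of that same $1+O(r^2/M)$ correction factor.
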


\begin{proof}
By Lemma~\ref{lem:lambda},
\[
M\asymp p(\log p)^\alpha.
\]
Since \(r\le (\log p)^\beta\) with \(\beta<1/2\), we have \(r^2/M=o(1)\). Therefore
\begin{equation}\label{eq:factorial_mr_v1}
(mM)_r
=(mM)^r\prod_{j=0}^{r-1}\Bigl(1-\frac{j}{mM}\Bigr)
=(mM)^r\exp\Bigl(O\Bigl(\frac{r^2}{M}\Bigr)\Bigr)
=(mM)^r(1+o(1)),
\end{equation}
uniformly for \(m\in\{1,2\}\). Dividing by \(p^r\), we get
\[
\frac{(mM)_r}{p^r}
=
m^r\Bigl(\frac{M}{p}\Bigr)^r(1+o(1))
=
m^r\lambda^r(1+o(1)).
\]
Finally, by Lemma~\ref{lem:lambda},
\[
\log(\lambda^r)=r\log\lambda=O((\log p)^\beta\log\log p)=o(\log p),
\]
so \(\lambda^r=p^{o(1)}\). Hence
\[
\frac{(mM)_r}{p^r}=m^r\,p^{o(1)}.
\qedhere\]
\end{proof}
\begin{remark}
The hypotheses in Lemmas~\ref{lem:rank-stability} and~\ref{lem:factorial-mr} can be weakened. We have chosen not to optimize them here, since the stated forms already suffice for all subsequent applications in this paper.
\end{remark}

\section{Proof of Main Result}\label{sec:proof}
In this section we prove Theorem~\ref{thm:counterexample} and Corollary~\ref{cor:fp-lower} in three stages. 
First, in Section~\ref{sec:reduce_iid}, we reduce the problem to an independent (i.i.d.) model. 
The proof of the theorem itself (Section~\ref{sec:subsec3.2}) is based on a second-moment analysis, which relies on a crucial Poisson-like estimate (Proposition~\ref{prop:poisson-miss}). 
The proof of this proposition, which constitutes our main technical contribution, is given in Section~\ref{sec:subsec3.3} and proceeds by applying Bonferroni inequalities as well as estimating factorial moments of an associated Poisson-like random variable.

\subsection{Reducing to an independent model}\label{sec:reduce_iid}
Fix a prime $p$ and an integer $k=O(\log p)$.
We compare the following two distributions on subsets of $\F_p$.
In the \emph{subset model}, $A_{\mathrm{subset}}$ is chosen uniformly from $\binom{\F_p}{k}$, and for any event $F$ that depends only on the subset $A_{\mathrm{subset}}$, we write
\[
\PP_{\mathrm{subset}}(F):=\PP\bigl(F(A_{\mathrm{subset}})\bigr).
\]
In the \emph{independent (i.i.d.)\ model}, let \(a_1,\dots,a_k\) be i.i.d.\ uniform on \(\F_p\), and set
\[
A_{\mathrm{iid}}:=(a_1,\dots,a_k).
\]
Thus \(A_{\mathrm{iid}}\) is an ordered \(k\)-tuple; when convenient, we also regard it as a multiset, so repetitions are allowed. For the i.i.d.\ sample \(A_{\mathrm{iid}}=(a_1,\dots,a_k)\), we define
\[
\Sigma(A_{\mathrm{iid}}) := \Bigl\{\sum_{i\in I} a_i:\ I\subseteq [k] \Bigr\}.
\]
This depends only on the underlying multiset of \(a_1,\dots,a_k\). For any event \(F\) depending only on the underlying multiset \(A_{\mathrm{iid}}\), we write
\[
\PP_{\mathrm{iid}}(F):=\PP\bigl(F(A_{\mathrm{iid}})\bigr).
\]

Let $E$ be the event that $a_1,\dots,a_k$ are pairwise distinct. Since $k=O(\log p)$, a union bound gives
\begin{equation}\label{eq:union_bound_1}
\PP(E^c)=\PP(\exists\, i<j:\ a_i=a_j)\le \binom{k}{2}\frac{1}{p}=o(1)\qquad (p\to\infty).
\end{equation}
Conditioned on $E$, the multiset $A_{\mathrm{iid}}$ is uniformly distributed over $\binom{\F_p}{k}$,
so $A_{\mathrm{iid}}\mid E$ has the same distribution as $A_{\mathrm{subset}}$. Let \(F\) be any event in the i.i.d.\ model that depends only on the multiset \(A_{\mathrm{iid}}=\{a_1,\dots,a_k\}\). Then the corresponding event in the subset model satisfies
\[
\PP_{\mathrm{subset}}(F)=\PP_{\mathrm{iid}}(F\mid E).
\]
By the law of total probability, for any such event $F$ we have
\[
\bigl|\PP(F\mid E)-\PP(F)\bigr|\le 2\PP(E^c).
\]Consequently, for every such event $F$, by~\eqref{eq:union_bound_1} we have
\begin{equation}\label{eq:coupling}
\bigl|\PP_{\mathrm{subset}}(F)-\PP_{\mathrm{iid}}(F)\bigr|
=\bigl|\PP(F(A_{\mathrm{iid}})\mid E)-\PP(F(A_{\mathrm{iid}}))\bigr|
\le 2\,\PP(E^c)=o(1).
\end{equation}
Applying \eqref{eq:coupling} to the event $F=\{\Sigma(A_{\mathrm{iid}})=\F_p\}$ yields
\begin{equation}\label{eq:coupling_vv2}
\PP_{\mathrm{subset}}\bigl(\Sigma(A)=\F_p\bigr)
\le \PP_{\mathrm{iid}}\bigl(\Sigma(A_{\mathrm{iid}})=\F_p\bigr)+o(1).
\end{equation}
Therefore, in order to prove Theorem~\ref{thm:counterexample} it suffices to show that
\[
\PP_{\mathrm{iid}}\bigl(\Sigma(A_{\mathrm{iid}})=\F_p\bigr)\longrightarrow 0 \qquad (p\to\infty).
\]
Henceforth all probabilities and expectations are taken in the i.i.d.\ model, and for convenience, we omit the subscript ``iid'' throughout the rest of the paper.

\subsection{Proof of Theorem~\ref{thm:counterexample} via a Poisson-like estimate}\label{sec:subsec3.2}

To prove Theorem~\ref{thm:counterexample}, we rephrase the event $F=\{\Sigma(A)=\F_p\}$ in terms of the number of ``missed'' elements in $\F_p$ as defined below. 
For each nonempty $S\subseteq [k]$, define the {\it indexed subset sum}
\begin{equation}\label{eq:sigmaS}
\sigma(S):=\sum_{i\in S} a_i\in \F_p. 
\end{equation}
For each $x\in \F_p^\times:=\F_p\setminus \{0\}$, let
\[
X_x:=\#\{\varnothing\ne S\subseteq [k]:\ \sigma(S)=x\}.
\]
Thus $X_x=0$ denotes the event that $x$ is missed by all nonempty indexed subset sums. Set
\[
U:=\#\{x\in \F_p^\times:\ X_x=0\}=\sum_{x\in\F_p^\times}\ind_{\{X_x=0\}}.
\]
Since $0\in\Sigma(A)$ always (via the empty sum), we have
$\Sigma(A)=\F_p\ \Longrightarrow\ U=0$,
and consequently
\begin{equation}\label{eq:reduce-U}
\PP\bigl(\Sigma(A)=\F_p\bigr)\le \PP(U=0).
\end{equation}

The technical core of this section is as follows.
For $B\subseteq\F_p^\times$, define $X_B:=\sum_{x\in B}X_x$. 
Intuitively speaking, one can view each $X_x$ as a Poisson random variable of mean $\lambda=\frac{M}{p}=\frac{2^k-1}{p}$, where each non-empty subset $S\subseteq [k]$ contributes the value $\sum_{i\in S} a_i$ uniformly in $\F_p$.

\begin{proposition}\label{prop:poisson-miss}
Fix $m\in\{1,2\}$. For any $B\subseteq\F_p^\times$ with $|B|=m$,
\[
\PP\bigl(X_B=0\bigr)=(1+o(1)) e^{-m\lambda}\qquad (p\to\infty),
\]
where the $o(1)$ is uniform over all such $B$.
\end{proposition}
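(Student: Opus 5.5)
We estimate $\PP(X_B=0)$ through factorial moments and Bonferroni inequalities. Write $X_B=\sum_{S\neq\varnothing}\sum_{x\in B}\ind_{\{\sigma(S)=x\}}$, a sum of $mM$ indicators indexed by pairs $(S,x)$. For every truncation level $R$, Bonferroni's inequalities give
\[
\sum_{r=0}^{2R+1}\frac{(-1)^r}{r!}\E(X_B)_r\le \PP(X_B=0)\le \sum_{r=0}^{2R}\frac{(-1)^r}{r!}\E(X_B)_r .
\]
We take $R$ of order $(\log p)^{\beta}$, where $\beta\in(\alpha,1/2)$ is fixed and $\alpha=c\log 2<1/2$. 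The Poisson tail $\sum_{r>2R}(m\lambda)^r/r!$ is then at most $(em\lambda/2R)^{2R}$, which is $o(e^{-m\lambda})$ because $R\gg\lambda$ and $R\log(R/\lambda)\gg\lambda$. It therefore suffices to show that, uniformly for $r\le 2R+1$,
\[
\E(X_B)_r=(m\lambda)^r+\mathrm{Err}_r \qquad\text{with}\qquad \sum_{r\le 2R+1}\frac{|\mathrm{Err}_r|}{r!}=o(e^{-m\lambda}).
\]
Since $e^{-m\lambda}=p^{-o(1)}$ by Lemma~\ref{lem:lambda}, a total error of size $p^{-1+o(1)}$ would be more than enough.

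We then expand $\E(X_B)_r$. It equals the sum over ordered $r$-tuples of distinct pairs $(S_1,x_1),\dots,(S_r,x_r)$ of $\PP(\sigma(S_j)=x_j\ \forall j)$. Pairs sharing the same $S$ with different $x$ contribute zero, so the $S_j$ may be taken distinct. Encode the tuple by the incidence matrix $V\in\{0,1\}^{r\times k}$ whose rows are the indicator vectors of $S_j$; its rows are distinct and nonzero. By Lemma~\ref{lem:rank-stability}, $\rank_{\F_p}V=\rank_{\Q}V=:d$. The system $Va=x$ with $a$ uniform in $\F_p^k$ holds with probability $p^{-d}$ if $x$ lies in the column space $W$ of $V$, and with probability $0$ otherwise. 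The full-rank tuples ($d=r$) contribute exactly $p^{-r}$ for each choice of $x\in B^r$. Their count is at most $(mM)_r$, so by the estimate \eqref{eq:factorial_mr_v1} from the proof of Lemma~\ref{lem:factorial-mr} they contribute at most $(m\lambda)^r(1+o(1))$. The matching lower bound requires showing that the number of rank-deficient tuples is negligible.

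The main obstacle is the total contribution of the rank-deficient tuples with $d<r$; this is where a bound like Lemma~\ref{lem:fullrank-dominates} has to be proved. For such a tuple the contribution is $p^{-d}\cdot\#\{x\in B^r\cap W\}$. Since $B\subseteq\F_p^\times$ has at most two elements, identify $x$ with a $\{0,1\}$-pattern after an affine relabelling: if $B=\{b\}$, then $x=b\mathbf 1$; if $B=\{b_1,b_2\}$, write $x=b_1\mathbf 1+(b_2-b_1)y$ with $y\in\{0,1\}^r$. Working over $\Q$ via rank stability, Lemmas~\ref{lem:no-sparse} and~\ref{lem:34} give at most $\tfrac34 2^d$ admissible patterns. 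This compares with $m^r$ choices of $x$ in the full-rank count, so each rank deficiency loses a factor $p$ while gaining only a bounded factor in the $x$-count.

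The remaining task is to count the matrices $V$ of rank $d<r$. Choose $d$ independent rows (at most $M^d$ ways). Each of the other $r-d$ rows then lies in $\{0,1\}^k\cap\mathrm{rowspace}$, which has at most $2^d$ elements by the standard Odlyzko-type bound, giving at most $\binom{r}{d}2^{d(r-d)}$ choices. The rank-$d$ contribution is therefore at most
\[
\binom rd\, 2^{d(r-d)}\, m^{r}\lambda^d\,\frac34 .
\]
With $r\le(\log p)^{\beta}$, $\beta<1/2$, we have $2^{rd}\le 2^{r^2}=p^{o(1)}$. Summing over $d<r$ and over $r\le 2R+1$ with the $1/r!$ weights gives $\lambda^{-1}$ times $p^{o(1)}$-type factors, but these powers of $2$ overwhelm $\lambda^{-1}$. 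So one must either extract a full factor $1/p$ per deficiency, via the $p^{-d}$ versus $p^{-r}$ comparison with $M^d$ rather than $M^r$ choices, or accept that this is the delicate part. I would first try to organise the count so that each lost rank costs a factor $M/(2^{r}\cdot\text{poly})$, i.e.\ roughly $p^{1-o(1)}$. Uniformity over $B$ is automatic, since all bounds depend only on $|B|$.
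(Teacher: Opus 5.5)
\paragraph{Verdict.} There is a genuine gap, at exactly the step you flag as delicate: bounding the rank-deficient part of $\E(X_B)_r$. Your setup is sound: Bonferroni truncation at $R\asymp(\log p)^\beta$, rank stability, and the full-rank count. For the full-rank count, your row count does show that rank-deficient tuples are only a $p^{-1+o(1)}$ fraction of all tuples. The failure is in the rank-deficient contribution to the moments, which you leave unresolved.

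\paragraph{Why the row count cannot work.} You choose $d$ independent rows, then place each remaining row among at most $2^d$ vectors of the row space. Relative to the main term $(m\lambda)^r$, this gives a rank-$d$ contribution of order $\binom rd 2^{d(r-d)}\lambda^{d-r}$, about $2^d/\lambda$ per lost rank. The Bonferroni tail forces $R\gg\lambda$, so this blows up for $d$ near $r\approx R$.

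Your proposed remedy does not help. A saving of $M/2^d$ per lost rank in the \emph{number} of tuples is what the row count already gives. It is cancelled by the gain $p^{r-d}$ in the probability ($p^{-d}$ instead of $p^{-r}$), leaving $\lambda/2^d$ per deficiency.

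Even a clean factor $\lambda^{-1}$ per lost rank would not suffice. The weighted moments $\E(X_B)_r/r!$ sum to roughly $e^{m\lambda}$, while the target is $e^{-m\lambda}$. So uniform relative errors must be $o(e^{-2m\lambda})$, which is far smaller than $\lambda^{-1}$; in effect you need a power saving $p^{-\eta}$. Applying Lemma~\ref{lem:34} to the single right-hand side $x\in B^r$ saves only the constant $\tfrac34$, and nothing at all when $m=1$.

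\paragraph{The missing idea.} Count by columns instead of rows, and apply the $\tfrac34$ bound to every column.
\begin{enumerate}
\item Fix the column space $W\subseteq\Q^r$ of $V$. It is spanned by $d$ columns from $\{0,1\}^r$, so there are at most $2^{rd}\le 2^{r^2}$ choices.
\item All $k$ columns of $V$ lie in $W\cap\{0,1\}^r$. Since the rows of $V$ are distinct and nonzero, Lemmas~\ref{lem:no-sparse} and~\ref{lem:34} give $|W\cap\{0,1\}^r|\le\tfrac34 2^d$.
\item Hence there are at most $2^{r^2}(\tfrac34 2^d)^k$ rank-$d$ tuples. Bounding the choices of $x$ crudely by $m^r$, the total rank-deficient contribution is at most
\[
m^r\,2^{r^2}\Bigl(\frac34\Bigr)^k\sum_{d<r}\Bigl(\frac{2^k}{p}\Bigr)^d\le m^r\,p^{-\log_2(4/3)+o(1)},
\]
since $2^{r^2}$ and $(2^k/p)^d$ are both $p^{o(1)}$ for $r\le(\log p)^\beta$.
\end{enumerate}
The factor $(3/4)^k=p^{-\log_2(4/3)+o(1)}$, obtained by constraining all $k$ columns at once, is the power-of-$p$ saving your argument lacks. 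Against $(m\lambda)^r\ge m^r$ it gives relative error $p^{-\eta}$. This survives the Bonferroni summation because $e^{2m\lambda}=p^{o(1)}$.
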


Assuming Proposition~\ref{prop:poisson-miss}, we now complete the proofs of Theorem~\ref{thm:counterexample} and Corollary~\ref{cor:fp-lower}.

\begin{proof}[Proof of Theorem~\ref{thm:counterexample}, assuming Proposition~\ref{prop:poisson-miss}]
In this proof, we use a standard second-moment argument; see, for example, \cite{AlSp16}.
By Proposition~\ref{prop:poisson-miss} with $m=1$, uniformly for all
\(x\in\F_p^\times\),
\[
\PP(X_x=0)=(1+o(1)) e^{-\lambda}.
\]
Recall that \(U=\sum_{x\in\F_p^\times}\ind_{\{X_x=0\}}\). By linearity of expectation, we have
\[
\E U=\sum_{x\in\mathbb F_p^\times}\mathbb P(X_x=0)=(1+o(1))(p-1)e^{-\lambda}.
\]
Similarly, for distinct $x\ne y$ in $\F_p^\times$, Proposition~\ref{prop:poisson-miss} (applied with $m=2$ and $B=\{x,y\}$) yields, uniformly in such pairs,
\[
\PP(X_x=0,\ X_y=0)=(1+o(1)) e^{-2\lambda},
\]
and hence
\begin{align*}
\E U^2
&=
\sum_{x\in\F_p^\times}\PP(X_x=0)
+
\sum_{\substack{x,y\in\F_p^\times\\x\neq y}}\PP(X_x=0,\ X_y=0)
\\&=
(1+o(1))(p-1)e^{-\lambda}+(1+o(1))(p-1)(p-2)e^{-2\lambda}.
\end{align*}
Hence
\[
\Var(U)=\E U^2-(\E U)^2=O(pe^{-\lambda})+o(p^2e^{-2\lambda}).
\]
By Lemma~\ref{lem:lambda}, \(\lambda=o(\log p)\), so
\[
\E U\asymp pe^{-\lambda}\to\infty.
\]
Therefore
\[
\Var(U)=O(\E U)+o\bigl((\E U)^2\bigr)=o\bigl((\E U)^2\bigr).
\]
By Chebyshev's inequality,
\[
\PP(U=0)\le \frac{\Var(U)}{(\E U)^2}=o(1).
\]
Combining this with \eqref{eq:reduce-U} and \eqref{eq:coupling_vv2}, we obtain
\[
\PP_{\mathrm{subset}}\bigl(\Sigma(A)=\F_p\bigr)=o(1),
\]
as required.
\end{proof}

\begin{proof}[Proof of Corollary~\ref{cor:fp-lower}, assuming Proposition~\ref{prop:poisson-miss}]
Let $\varepsilon>0$ be an arbitrary constant, and set \(c=\frac{1}{2\log 2}-\varepsilon\). By Theorem~\ref{thm:counterexample}, if \(k=\lfloor \log_2 p + c\log\log p\rfloor\), then
\[
\PP_{\mathrm{subset}}\bigl(\Sigma(A)=\F_p\bigr)\to 0
\qquad (p\to\infty).
\]
In particular, for all sufficiently large primes $p$, this probability is less than $\tfrac12$. By the definition of $f(p)$, it follows that \(f(p)>k\). Therefore
\[
f(p)\ge k+1
\ge \log_2 p+\left(\frac{1}{2\log 2}-\varepsilon\right)\log\log p,
\]
and the result follows.
\end{proof}

The remainder of this section is devoted to the proof of Proposition~\ref{prop:poisson-miss}.

\subsection{Proof of Proposition~\ref{prop:poisson-miss}}\label{sec:subsec3.3}
Fix $m\in\{1,2\}$ and $B\subseteq \F_p^\times$ with $|B|=m$. 
We estimate the factorial moments of $X_B$ by comparing them with those of a Poisson random variable of mean $m\lambda$,
and then apply these bounds in the following Bonferroni-type inequalities (which arise from the inclusion-exclusion principle on factorial moments; see, e.g.,~\cite[Eq.~(1.276)]{JKK05} and \cite[Exercises~1.6.9--1.6.10]{Durrett19}). 
For every nonnegative integer-valued random variable $Z$ with finite support, it holds that
\[
\PP(Z=0)=\sum_{r=0}^{\infty}(-1)^r\frac{\E[(Z)_r]}{r!},
\]
and for every integer $R\ge 0$,
\begin{equation}\label{eq:bonferroni}
\left|\PP(Z=0)-\sum_{r=0}^{R}(-1)^r\frac{\E[(Z)_r]}{r!}\right|
\le \frac{\E[(Z)_{R+1}]}{(R+1)!}.
\end{equation}

Throughout the rest of this proof, choose an exponent $\beta$ with
\[
\alpha<\beta<\frac12,
\]
where $\alpha=c\log 2$ is the same as in Lemma~\ref{lem:lambda}, and set
\[
R:=\Bigl\lfloor (\log p)^{\beta}\Bigr\rfloor.
\]
Recall that $M=2^k-1$ and $\lambda=M/p$. Since $\lambda=o(R)$, it is enough for the Bonferroni step to prove that there exists a constant $\eta_*>0$\footnote{Later in the proof, we will rename this constant as $\eta_4$ once all error terms have been assembled.} such that
\begin{equation}\label{eq:goal-moment}
\mathbb E[(X_B)_r]=(m\lambda)^r\bigl(1+O(p^{-\eta_*})\bigr)
\qquad (1\le r\le R),
\end{equation}
uniformly for all $B\subseteq\F_p^\times$ with $|B|=m$. As we will show later, once this estimate is proved, substituting it into \eqref{eq:bonferroni} with $Z=X_B$ completes the proof.

We next introduce some notation.
Write $a=(a_1,\dots,a_k)\in\F_p^k$.  For each nonempty $S\subseteq[k]$, let $v(S)\in\{0,1\}^k$ be its indicator vector. So the definition \eqref{eq:sigmaS} becomes $\sigma(S)=a\cdot v(S)$. 
Given subsets $S_1,\dots,S_r\subseteq[k]$, we write
$V=V(S_1,\dots,S_r)\in\{0,1\}^{r\times k}$ for the incidence matrix with entries $V_{j,t}=\ind_{\{t\in S_j\}}$. 

The following lemma relates estimates of certain probabilities to corresponding ranks.
Fix distinct nonempty subsets $S_1,\dots,S_r$ and elements $b_1,\dots,b_r\in\F_p$, and let $d:=\rank_{\F_p}(V)$.

\begin{lemma}\label{lem:rank-prob}
With notation as above, the system $a\cdot v(S_j)=b_j$ for $1\le j\le r$ is either inconsistent (probability $0$),
or consistent and then
\[
\PP\bigl(\sigma(S_j)=b_j\ \forall j\bigr)=p^{-d}.
\]
\end{lemma}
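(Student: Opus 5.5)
The plan is to view the event $\{\sigma(S_j)=b_j\ \forall j\}$ as a linear system over the field $\F_p$ in the unknown vector $a=(a_1,\dots,a_k)\in\F_p^k$, which is uniformly distributed on $\F_p^k$ in the i.i.d.\ model. Writing $b=(b_1,\dots,b_r)^{\mathsf T}$, the event is exactly $\{Va=b\}$, where $V=V(S_1,\dots,S_r)$ is the incidence matrix, now viewed as a linear map $\F_p^k\to\F_p^r$. Hence
\[
\PP\bigl(\sigma(S_j)=b_j\ \forall j\bigr)=\frac{\#\{a\in\F_p^k:\ Va=b\}}{p^k},
\]
and the whole statement reduces to counting solutions of a linear system over a finite field.

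The key steps, in order, are as follows. First, if $b\notin \operatorname{im}(V)$ then the solution set is empty and the probability is $0$; this is the inconsistent case. Second, if $b\in\operatorname{im}(V)$, pick any solution $a_0$; then the solution set is the affine subspace $a_0+\ker(V)$. Third, by rank--nullity over $\F_p$, $\dim_{\F_p}\ker(V)=k-d$ with $d=\rank_{\F_p}(V)$, so the solution set has exactly $p^{k-d}$ elements. Dividing by $p^k$ gives $p^{-d}$.

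There is essentially no obstacle here. The only point requiring care is that the rank must be taken over $\F_p$, not $\Q$, since the counting happens in $\F_p^k$. This is precisely why the lemma is stated with $d=\rank_{\F_p}(V)$. In later applications one will presumably combine it with Lemma~\ref{lem:rank-stability} to replace this by the rational rank when $r\le(\log p)^\beta$. Note also that the distinctness and nonemptiness of the $S_j$ play no role in this lemma itself.
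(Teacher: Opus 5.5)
Your proposal is correct and follows the paper's argument: both view the event as $T(a)=b$ for the linear map $T=V\colon\F_p^k\to\F_p^r$ with $a$ uniform on $\F_p^k$, note the probability is $0$ if $b\notin\operatorname{im}T$, and otherwise count a fiber of size $|\ker T|=p^{k-d}$ to get $p^{-d}$. Your remarks that the rank must be taken over $\F_p$ and that distinctness of the $S_j$ plays no role here are also accurate.
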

\begin{proof}
Define the linear map $T:\mathbb F_p^k\to\mathbb F_p^r$ by
$T(a)=(a\cdot v(S_1),\dots,a\cdot v(S_r))$.
Then $\dim(\mathrm{Im}\,T)=\mathrm{rank}_{\mathbb F_p}(V)=d$, so $|\mathrm{Im}\,T|=p^d$ and every fiber has size $|\ker T|=p^{k-d}$.
If $b:=(b_1,\dots,b_r)\notin\mathrm{Im}\,T$ then $\mathbb P(T(a)=b)=0$;
otherwise $\mathbb P(T(a)=b)=p^{k-d}/p^k=p^{-d}$.
\end{proof}

Now we expand $(X_B)_r$ by counting ordered $r$-tuples of distinct pairs $(S_j,b_j)$ with $\varnothing\ne S_j\subseteq[k]$ and $b_j\in B$:
\[
(X_B)_r=\sum_{\substack{(S_1,b_1),\dots,(S_r,b_r)\\ \text{all distinct}}}
\ind_{\{\sigma(S_1)=b_1,\dots,\sigma(S_r)=b_r\}}=
\sum_{\substack{S_1,\dots,S_r\subseteq [k]\\
\varnothing\ne S_i,\ \text{all distinct}}}
\ \sum_{b_1,\dots,b_r\in B}
\ind_{\{\sigma(S_1)=b_1,\dots,\sigma(S_r)=b_r\}}.
\]
Here the second equality is valid because if \(S_i=S_j\) and \(b_i\ne b_j\),
then the corresponding indicator is identically zero, while if \(S_i=S_j\) and
\(b_i=b_j\), the pairs \((S_i,b_i)\) and \((S_j,b_j)\) are not distinct.

By taking expectations and applying Lemma~\ref{lem:rank-prob}, we have
\begin{equation}\label{eq:moment-rank}
\E[(X_B)_r]=\sum_{d=1}^{r}\frac{N_{r,d}(B)}{p^d},
\end{equation}
where \(N_{r,d}(B)\) denotes the number of ordered tuples
\[
(S_1,\dots,S_r;\,b_1,\dots,b_r),
\]
with \(S_1,\dots,S_r\) distinct nonempty subsets of \([k]\) and \(b_1,\dots,b_r\in B\), for which the incidence matrix has rank \(d\) over \(\F_p\) and the corresponding system is consistent. By Lemma~\ref{lem:rank-stability}, we may freely interpret ranks over \(\Q\).

The main term in the expression \eqref{eq:moment-rank} of $\E[(X_B)_r]$ comes from $d=r$. To see this, we need a crude bound on the number of low-rank incidence ``patterns'' as follows.

\begin{lemma}\label{lem:lowrankcount}
Fix $r\le R$ and $d<r$.
Let $T_{r,d}$ be the number of ordered $r$-tuples of distinct nonempty subsets $(S_1,\dots,S_r)$ of $[k]$ whose incidence matrix has rank $d$ over $\mathbb{Q}$.
Then
\[
T_{r,d} \le 2^{r^2}\Bigl(\tfrac34\,2^d\Bigr)^k.
\]
\end{lemma}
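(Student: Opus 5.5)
We view the incidence matrix $V=V(S_1,\dots,S_r)\in\{0,1\}^{r\times k}$ column by column rather than row by row. Each column $v^{(t)}\in\{0,1\}^r$ records which of the $S_j$ contain the element $t$. The rank condition then says that all $k$ columns lie in the column space $W\subseteq\Q^r$, a $d$-dimensional subspace. The plan is: first fix the subspace $W$ (equivalently, a basis from a small finite family), then count the ways to choose each column independently from $W\cap\{0,1\}^r$, and apply Lemma~\ref{lem:34} to bound that set by $\tfrac34\,2^d$.

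\textbf{Step 1: the hypotheses of Lemma~\ref{lem:34} hold.} Since the $S_j$ are distinct and nonempty, the rows of $V$ are distinct and nonzero. Lemma~\ref{lem:no-sparse} then shows that $W^\perp$ contains no nonzero vector supported on at most two coordinates. Because $d<r$, Lemma~\ref{lem:34} gives $|W\cap\{0,1\}^r|\le \tfrac34\,2^d$.

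\textbf{Step 2: bound the number of possible $W$.} Every such $W$ is spanned by $d$ of the columns of $V$, which are vectors in $\{0,1\}^r$. So $W$ is determined by an (unordered) choice of at most $d$ vectors from $\{0,1\}^r$. The number of candidate subspaces is therefore at most
\[
\binom{2^r}{d}\le (2^r)^d\le 2^{rd}\le 2^{r^2}.
\]
Note that only those $W$ satisfying the condition of Step~1 can arise; for every such $W$ the $\tfrac34$ bound applies.

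\textbf{Step 3: count and combine.} For a fixed admissible $W$, the number of matrices $V$ whose columns all lie in $W\cap\{0,1\}^r$ is at most $\bigl(\tfrac34\,2^d\bigr)^k$. A tuple $(S_1,\dots,S_r)$ is determined by $V$. Summing over the at most $2^{r^2}$ choices of $W$ gives
\[
T_{r,d}\le 2^{r^2}\Bigl(\tfrac34\,2^d\Bigr)^k.
\]
Counting $V$ through its own column space $W$ causes no overcount that matters, since we only need an upper bound.

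The one subtle point is the order of the argument. The non-degeneracy of $W^\perp$ is a property of the particular $V$, not of an arbitrary $W$. So we should count pairs $(W,V)$ with $W=\mathrm{col}(V)$, restrict to those $W$ arising from some valid $V$, and apply Lemma~\ref{lem:34} to each such $W$. No further obstacle is expected.
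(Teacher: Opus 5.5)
Your proposal is correct and follows the paper's proof essentially verbatim: you bound the number of column spaces $W$ by $(2^r)^d\le 2^{r^2}$ via spanning $0/1$ vectors, then use Lemmas~\ref{lem:no-sparse} and~\ref{lem:34} to limit each of the $k$ columns to at most $\tfrac34\,2^d$ choices. Your remark that the non-degeneracy hypothesis should be verified only for those $W$ that arise as the column space of some valid $V$ is a useful clarification that the paper leaves implicit.
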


\begin{proof}
Let $V$ be an incidence matrix of rank $d < r$ and let \(W\) denote the column space of \(V\) over \(\mathbb{Q}\). The number of possible $W$ is at most the number of ordered $d$-tuples of vectors in $\{0,1\}^r$ spanning $W$, hence at most $(2^r)^d\le 2^{r^2}$. Now fix $W$. Every column of $V$ must lie in $\Gamma:=W\cap\{0,1\}^r$. By Lemma~\ref{lem:no-sparse} and Lemma~\ref{lem:34}, we have $|\Gamma|\le \frac34\,2^d$. Thus the number of possible column sequences is at most $|\Gamma|^k$, giving the stated bound.
\end{proof}

We now establish~\eqref{eq:goal-moment}. Fix $1\le r\le R$.
Using \eqref{eq:moment-rank} we express $\E[(X_B)_r]$ as a sum of contributions from the full-rank case $d = r$ and the low-rank cases $1 \leq d < r$.

We first bound the low-rank cases.
Let $d<r$.
There are at most $m^r$ ways to select elements $b_1, ..., b_r\in B$, so we have $N_{r,d}(B)\le m^r T_{r,d}$. 
By Lemma~\ref{lem:lowrankcount}, we have
\[
\sum_{d<r}\frac{N_{r,d}(B)}{p^d}
\le m^r\,2^{r^2}\Bigl(\tfrac34\Bigr)^k\sum_{d<r}\left(\frac{2^k}{p}\right)^d.
\]
Because $r\le R\le(\log p)^\beta$ with $\beta<1/2$, we have $2^{r^2}=p^{o(1)}$.
Moreover, by Lemma~\ref{lem:lambda}, 
we have $2^k/p\leq 2\lambda \ll(\log p)^\alpha$, so it follows that 
\[
\max_{d<r}\left(\frac{2^k}{p}\right)^d \le (\log p)^{\alpha r}
=\exp\bigl(O(r\log\log p)\bigr)=p^{o(1)}.
\]
Finally, we have $(3/4)^k=p^{-\eta_0+o(1)}$ with $\eta_0:=-\log_2(3/4)>0$.
Hence, by Lemma~\ref{lem:factorial-mr}, there exists a fixed $\eta>0$ such that uniformly for all $r\le R$ and all $|B|=m\in\{1,2\}$,
\begin{equation}\label{eq:lowrank-negl}
\sum_{d<r}\frac{N_{r,d}(B)}{p^d}
\le r m^r p^{-\eta_0+o(1)}
\le \frac{(mM)_r}{p^r}\,p^{-\eta}.
\end{equation}

For the full-rank case, note that when \(\rank(V)=r\), the map \(T\) (defined in the proof of Lemma~\ref{lem:rank-prob}) is surjective, so the system is consistent for every right-hand side. Thus the contribution from the full-rank case equals
\begin{equation}\label{eq:full-rank-main}
\frac{m^r}{p^r}\cdot
\#\Bigl\{(S_1,\dots,S_r):\ \varnothing\ne S_i\subseteq [k],\ \text{all distinct},\ \rank(V)=r\Bigr\}.
\end{equation}
To estimate \eqref{eq:full-rank-main}, it suffices to show that almost all ordered \(r\)-tuples of distinct nonempty subsets of \([k]\) have full rank.

\begin{lemma}\label{lem:fullrank-dominates}
Uniformly for all \(1\le r\le R\), the number of ordered \(r\)-tuples of distinct nonempty subsets \((S_1,\dots,S_r)\) of \([k]\) with
\(\rank(V)<r\) is at most
\[
(M)_r\,p^{-\eta_1}
\]
for some absolute constant \(\eta_1>0\).
\end{lemma}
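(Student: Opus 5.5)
The plan is to sum the bound of Lemma~\ref{lem:lowrankcount} over all deficient ranks $1\le d<r$ and compare the result with $(M)_r$. By definition, the number of ordered $r$-tuples of distinct nonempty subsets with $\rank(V)<r$ equals $\sum_{d=1}^{r-1}T_{r,d}$. Lemma~\ref{lem:lowrankcount} applies here because distinct nonempty subsets give an incidence matrix with distinct nonzero rows, and since $r\le R\le(\log p)^\beta$, Lemma~\ref{lem:rank-stability} lets us read ranks over $\Q$ or $\F_p$ interchangeably. We therefore get
\[
\sum_{d<r}T_{r,d}\le 2^{r^2}\Bigl(\tfrac34\Bigr)^k\sum_{d=1}^{r-1}2^{dk}\le r\,2^{r^2}\Bigl(\tfrac34\Bigr)^k 2^{(r-1)k}.
\]

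Next we compare this with $(M)_r$. As in the proof of Lemma~\ref{lem:factorial-mr}, $r^2/M=o(1)$ gives $(M)_r=(1+o(1))M^r$. Also $M^r=2^{kr}(1-2^{-k})^r=(1+o(1))2^{kr}$, since $r2^{-k}\to0$. Hence the ratio of our bound to $(M)_r$ is at most
\[
(1+o(1))\,r\,2^{r^2}\Bigl(\tfrac34\Bigr)^k 2^{-k}.
\]
The factor $2^{-k}\le 2/p$ is already a full power saving, and $(3/4)^k=p^{-\eta_0+o(1)}$ with $\eta_0=-\log_2(3/4)$ is an additional gain. The remaining factors satisfy $r\,2^{r^2}=\exp(O((\log p)^{2\beta}))=p^{o(1)}$ because $2\beta<1$. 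The ratio is therefore at most $p^{-1-\eta_0+o(1)}$, uniformly in $1\le r\le R$, and any fixed $\eta_1\in(0,1+\eta_0)$ works, for instance $\eta_1=1$, once $p$ is large.

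Each step reuses an estimate already established for \eqref{eq:lowrank-negl}, so I expect no real obstacle. The only point needing care is uniformity: we must check that the $p^{o(1)}$ factors are bounded uniformly over $r\le R$. This holds because every one of them is controlled by its value at $r=R$, which is $\exp(O((\log p)^{2\beta}))$.
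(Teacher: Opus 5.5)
Your proof is correct and follows the paper's argument: you sum the rank-$d$ counts over $d<r$, compare with $(M)_r\asymp 2^{kr}$, and gain the factor $r\,2^{r^2}2^{-k}=p^{-1+o(1)}$. The only difference is that the paper uses just the trivial bound $|W\cap\{0,1\}^r|\le 2^d$, giving $T_{r,d}\le 2^{r^2}2^{dk}$; as you observe yourself, the factor $2^{-k}$ already yields the power saving, so the extra $(3/4)^k$ from Lemma~\ref{lem:lowrankcount} is harmless but unnecessary.
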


\begin{proof}
Let \(T_{r,d}\) be as in Lemma~\ref{lem:lowrankcount}. By the same argument as in the proof of Lemma~\ref{lem:lowrankcount}, but using only the trivial bound
\[
|W\cap\{0,1\}^r|\le 2^d,
\]
we obtain
\[
T_{r,d}\le 2^{r^2}(2^d)^k
\qquad (d<r).
\]
Hence
\[
\#\{(S_1,\dots,S_r):\ \text{all distinct and }\rank(V)<r\}
=
\sum_{d<r}T_{r,d}
\le
2^{r^2}\sum_{d<r}(2^d)^k
\le
r\,2^{r^2}(2^k)^{r-1}.
\]
On the other hand, since \(r\le R\) and \(M=2^k-1\asymp 2^k\), we have
\[
(M)_r\asymp (2^k)^r
\]
uniformly for \(r\le R\) (for instance by \eqref{eq:factorial_mr_v1} with \(m=1\)). Therefore
\[
\frac{\#\{(S_1,\dots,S_r):\ \text{all distinct and }\rank(V)<r\}}{(M)_r}
\ll
r\,2^{r^2}\,2^{-k}
=
p^{-1+o(1)}.
\]
Since \(r\le R\le (\log p)^\beta\), we have \(2^{r^2}=p^{o(1)}\), while
\(2^{-k}=p^{-1+o(1)}\). Thus the last quantity is at most \(p^{-1/2}\) for all sufficiently large \(p\). This proves the lemma.
\end{proof}

By Lemma~\ref{lem:fullrank-dominates}, the quantity in \eqref{eq:full-rank-main} equals
\[
\frac{m^r}{p^r}\Bigl((M)_r+O\bigl((M)_r p^{-\eta_1}\bigr)\Bigr)
=
\frac{m^r(M)_r}{p^r}(1+O(p^{-\eta_1})).
\]
Since \eqref{eq:factorial_mr_v1} (applied first with \(m=1\), and then with the fixed value \(m\in\{1,2\}\)) gives
\[
(M)_r=M^r\exp\Bigl(O\Bigl(\frac{r^2}{M}\Bigr)\Bigr)
\qquad\text{and}\qquad
(mM)_r=(mM)^r\exp\Bigl(O\Bigl(\frac{r^2}{M}\Bigr)\Bigr)
\]
uniformly for \(r\le R\), it follows that
\[
m^r(M)_r=(mM)_r\exp\Bigl(O\Bigl(\frac{r^2}{M}\Bigr)\Bigr).
\]
Now \(r\le R=(\log p)^\beta\) and \(M=2^k-1\asymp p(\log p)^\alpha\), so \(r^2/M=p^{-1+o(1)}\). Hence there exists an absolute constant \(\eta_1'>0\) such that \(m^r(M)_r=(mM)_r\bigl(1+O(p^{-\eta_1'})\bigr)\) uniformly for \(r\le R\), and so the contribution from the full-rank case is
\[
\frac{(mM)_r}{p^r}\bigl(1+O(p^{-\eta_1''})\bigr),
\]
uniformly for all \(1\le r\le R\), where \(\eta_1'':=\min\{\eta_1,\eta_1'\}>0\). Together with~\eqref{eq:moment-rank} and~\eqref{eq:lowrank-negl} this yields
\[
\mathbb E[(X_B)_r]=\frac{(mM)_r}{p^r}\bigl(1+O(p^{-\eta_2})\bigr)
\qquad (1\leq r\le R),
\]
uniformly for all $B\subseteq \F_p^\times$ with $|B|\in\{1,2\}$, where $\eta_2:=\min\{\eta/2,\eta_1''/2\}>0$. By~\eqref{eq:factorial_mr_v1}, we have
\[
(mM)_r=(mM)^r\exp\Bigl(O\Bigl(\frac{r^2}{M}\Bigr)\Bigr)
\]
uniformly for \(r\le R\). Since \(r^2/M=p^{-1+o(1)}\), there exists an absolute constant
\(\eta_3>0\) such that \((mM)_r=(mM)^r\bigl(1+O(p^{-\eta_3})\bigr)\) uniformly for \(r\le R\). As \(M/p=\lambda\), it follows that
\[
\frac{(mM)_r}{p^r}
=(m\lambda)^r\bigl(1+O(p^{-\eta_3})\bigr).
\]
Therefore
\begin{equation}\label{eq:factorial-moment-22}
\mathbb E[(X_B)_r]=(m\lambda)^r\bigl(1+O(p^{-\eta_4})\bigr)
\qquad (r\le R),
\end{equation}
uniformly for all \(B\subseteq \F_p^\times\) with \(|B|\in\{1,2\}\), where
\(\eta_4:=\min\{\eta_2,\eta_3\}>0\), proving \eqref{eq:goal-moment}.

Now, set $\widetilde R:=R-1$. Applying \eqref{eq:bonferroni} with truncation level $\widetilde R$ and $Z=X_B$ gives
\[
\mathbb P(X_B=0)
=\sum_{r=0}^{\widetilde R}(-1)^r\frac{\mathbb E[(X_B)_r]}{r!}
 + O\left(\frac{\mathbb E[(X_B)_{\widetilde R+1}]}{(\widetilde R+1)!}\right).
\]
Since $\widetilde R+1=R$, we may use \eqref{eq:factorial-moment-22} for every $r\le R$ to obtain, uniformly in $B$ with $|B|\in\{1,2\}$,
\[
\sum_{r=0}^{\widetilde R}(-1)^r\frac{\mathbb E[(X_B)_r]}{r!}
=\sum_{r=0}^{\widetilde R}(-1)^r\frac{(m\lambda)^r}{r!}
 + O\left(p^{-\eta_4}\sum_{r=0}^{\widetilde R}\frac{(m\lambda)^r}{r!}\right),
\]and by \eqref{eq:goal-moment} we have\[
\frac{\mathbb E[(X_B)_{\widetilde R+1}]}{(\widetilde R+1)!}
=\frac{\mathbb E[(X_B)_R]}{R!}
=\frac{(m\lambda)^R}{R!}\,(1+o(1)).
\]
Using $\sum_{r=0}^{\widetilde R}(m\lambda)^r/r!\le e^{m\lambda}$ and $e^{m\lambda}=p^{o(1)}$ (by Lemma~\ref{lem:lambda}), the error
$O(p^{-\eta_4}e^{m\lambda})$ is $o(e^{-m\lambda})$. Moreover, since $\lambda=o(R)$, Stirling's formula yields
\[
\frac{(m\lambda)^R}{R!} \le \left(\frac{e m\lambda}{R}\right)^R  =o(e^{-m\lambda}).
\]
Finally, because $m\lambda=o(\widetilde R)$ (by Lemma~\ref{lem:lambda}), the alternating Taylor remainder gives
\[
\sum_{r=0}^{\widetilde R}(-1)^r\frac{(m\lambda)^r}{r!}
= e^{-m\lambda} + O\left(\frac{(m\lambda)^{\widetilde R+1}}{(\widetilde R+1)!}\right)
= e^{-m\lambda}+o(e^{-m\lambda}).
\]
Combining the above estimates, we conclude
\[
\mathbb P(X_B=0)=e^{-m\lambda}\,(1+o(1)),
\]
uniformly for all $B\subseteq \F_p^\times$ with $|B|=m\in\{1,2\}$.
This proves Proposition~\ref{prop:poisson-miss}.\qed

\section{Concluding Remarks}\label{sec:concluding}

Our main theorem shows that, for primes $p$,
\begin{equation}\label{eq:quant_lower_bound_with_small_o}
f(p) \ge \log_2 p + \left(\frac{1}{2\log 2}+o(1)\right)\log\log p.
\end{equation}
This demonstrates that the upper bound $f(N) \le \log_2 N + o(\log\log N)$ cannot hold uniformly over all finite abelian groups of order $N$. Nevertheless, certain abelian groups $G$ of order $N$ do exist for which, with high probability, a uniformly random subset of size $\log_2 N + o(\log\log N)$ (or even smaller size) generates all elements of $G$. For a related discussion in the special case $G=(\mathbb Z/2\mathbb Z)^d$, see Sothanaphan~\cite{Sothanaphan26}.

Erd\H{o}s and R\'{e}nyi~\cite[Theorem~2]{ErRe65} proved that for all $N$,
\begin{equation}\label{eq:ER-upper}
f(N) \le \log_2 N + \frac{1}{\log 2}\log\log N + O(1).
\end{equation}
It is therefore natural to ask for an asymptotically sharp second-order term.

\begin{problem}
Determine the value of
\[
c_*:=\limsup_{N\to\infty}\frac{f(N)-\log_2 N}{\log\log N}.
\]
\end{problem}

Using \eqref{eq:quant_lower_bound_with_small_o} and \eqref{eq:ER-upper}, we can derive that  \(\frac{1}{2\log 2}\le c_* \le \frac{1}{\log 2}\). It is plausible that the upper bound is sharp, but the present method does not reach the regime \(c_*>\frac{1}{2\log 2}\). One evident bottleneck is the proof of Lemma~\ref{lem:lowrankcount}. It would be very interesting to push the lower bound constant beyond \(\frac{1}{2\log 2}\).

\bigskip

{\noindent \bf Acknowledgements.}
We thank Wouter van Doorn for helpful suggestions that improved the presentation of this paper. We are grateful to Nat Sothanaphan for providing relevant materials in~\cite{Sothanaphan26}. We also thank Boris Alexeev, Ingo Alth\"{o}fer, Thomas Bloom, Mehtaab Sawhney, Terence Tao, and Shengtong Zhang for helpful discussions on the Erd\H{o}s Problems website forum.
Finally, we are grateful to Thomas Bloom for founding and maintaining the Erd\H{o}s Problems website~\cite{EP543}. This work is supported by National Key Research and Development Program of China 2023YFA1010201, National Natural Science Foundation of China grant 12125106, and Innovation Program for Quantum Science and Technology 2021ZD0302902.

\medskip

{\noindent \bf Disclosure.}
An AI assistant was used as an exploratory aid at an early stage of this
project. All statements, proofs, and the final presentation in this paper were
independently verified and written by the authors. A fuller historical note on
the exploratory phase is recorded in the arXiv version of this paper~\cite[Section~1.1]{MaTa26}.

\end{document}